\theoremstyle{definition}
\newtheorem*{thm}{Theorem}
\newtheorem*{remark}{Remarks}
\numberwithin{equation}{section}
\def\RR{\mathbb{R}}
\def\FF{\mathbb{F}}
\def\ZZ{\mathbb{Z}}
\def\QQ{\mathbb{Q}}
\def\NN{\mathbb{N}}
\newcommand{\rbr}[1]{\left( {#1} \right)}
\newcommand{\cbr}[1]{\left\{ {#1} \right\}}
\begin{document}

\author{Martin Grant, Kyle Hambrook, Alex Rusterholtz}
\title{L'H\^{o}pital's Rule is Equivalent to the Least Upper Bound Property}

\begin{abstract}
We prove that, in an arbitrary ordered field, L'H\^{o}pital's Rule is true if and only if the Least Upper Bound Property is true. We do the same for Taylor's Theorem 
with Peano Remainder, and for one other property sometimes given as a corollary of L'H\^{o}pital's Rule. 
\end{abstract}

\thanks{This material is based upon work supported by the National Science Foundation under
Award No. 2400329}

\maketitle

\section{Introduction}

In a first course in real analysis, the set of real numbers $\RR$ is often defined axiomatically 
as an ordered field satisfying the Least Upper Bound Property (also called the Dedekind Completeness Property). 
It is possible to define $\RR$ using a different property. 
For example, we could instead define $\RR$ as an ordered field satisfying the property that every bounded increasing sequence converges. 
We can do this because the property that every bounded increasing sequence converges is \textit{equivalent} to the Least Upper Bound Property -- 
two properties of an ordered field are called equivalent if every ordered field that satisfies one property satisfies the other, and conversely. 

Various other properties are known to be equivalent to the Least Upper Bound Property.
For example, the Intermediate Value Theorem, Extreme Value Theorem, and Mean Value Theorem are each equivalent to the Least Upper Bound Property. 
On the other hand, the Archimedean Property does not imply the Least Upper Bounded Property. 
Neither does the Cauchy Completeness Property. 
However, the Archimedean Property and the Cauchy Completeness Property, taken together, are equivalent to the Least Upper Bound Property.

The recent articles of Propp \cite{Propp}, Teismann \cite{Teismann}, and Deveau and Teismann \cite{DeveauTeismannBigList} 
show that a large number of statements are equivalent to the Least Upper Bound Property. 
Notably, L'H\^{o}pital's Rule was not discussed in \cite{Propp} or \cite{Teismann}. 
Deveau and Teismann \cite{DeveauTeismannBigList} showed that 
L'H\^{o}pital's Rule 
together with 
an additional property called 
Countable Cofinality (defined in the next section) 
imply the Least Upper Bound Property. 

In this paper, we prove that L'H\^{o}pital's Rule implies the Least Upper Bound Property without additional assumptions. 
Additionally, we show that Taylor's Theorem with Peano Remainder and the Limits of Derivatives Property (defined in the next section) each imply the Least Upper Bound Property.

\section{Definitions}

Let $\FF$ be an ordered field. Then $\FF$ contains canonically isomorphic copies of 
the set of rational numbers, the set of integers, and the set of positive integers. 
In a slight abuse of notation, we denote these copies by $\QQ$, $\ZZ$, and $\NN$, respectively. 
Also, $\FF$ is equipped with the order topology, i.e., the topology generated by the intervals of the form $(a,b)=\cbr{x \in \FF: a < x < b}$, where $a,b \in \FF$. 
The absolute value on $\FF$ is defined by $|x| = x$ if $x \geq 0$ and $|x| = -x$ if $x \leq 0$. 
For a function $f:\mathbb{F} \to \mathbb{F}$, we define limits, continuity, and derivatives in the usual way, except we replace $\RR$ by $\FF$. 
For example, for $a,L \in \FF$, the statement $\lim_{x \to a} f(x) = L$ means that for every positive $\epsilon \in \FF$ there is a positive $\delta \in \FF$ such that for every $x \in \FF$, if $0 < |x-a| < \delta$, then $|f(x)-L| < \epsilon$.

We now define some properties that $\FF$ \textbf{may} satisfy. All of these properties hold if $\FF = \RR$.

\begin{description}[leftmargin=0pt,topsep=0pt]
\setlength\itemsep{0.5em}
\item[\textbf{Least Upper Bound Property.}]   
Every subset of $\FF$ which is non-empty and bounded above has a least upper bound. 
(We say $\FF$ is complete if it satisfies the Least Upper Bound Property; otherwise, we say $\FF$ is incomplete.)  

\item[\textbf{Archimedian Property.}] 
For every $x \in \mathbb{F}$ there exists an $n \in \mathbb{N}$ such that $n > x$. 
(We say $\FF$ is Archimedean if it satisfies the Arch; otherwise, we say $\FF$ is non-Archimedean.)
\item[\textbf{Countable Cofinality.}]
There exists an unbounded sequence $(\lambda_n)_{n \in \NN}$ in $\FF$. 
\item[\textbf{Mean Value Theorem.}] 
If $f:\mathbb{F}\to\mathbb{F}$ is differentiable on $(a,b)$ and continuous on $[a,b]$, 
then there exists $c \in (a,b)$ such that $f(b)-f(a) = f'(c)(b-a)$. 
\item [\textbf{Taylor's Theorem with Lagrange Remainder.}] 
For every $n \in \NN$, 
if $f:\mathbb{F}\to\mathbb{F}$ is $n$ times differentiable on $(a,b)$ 
and $f^{(n-1)}$ exists and is continuous on $[a,b]$, 
then there exists $c \in \left( a,b \right)$ such that 
$$
f(b) - \sum_{k=0}^{n-1} \dfrac{f^{(k)}(a)}{k!} (b-a)^k  = \dfrac{f^{(n)}(c)}{n!}(b-a)^{n}. 
$$
\item [\textbf{Taylor's Theorem with Peano Remainder.}] 
For every $n \in \NN$, 
if $f:\mathbb{F}\to\mathbb{F}$ is $n$ times differentiable at $a$, then 
$$
\lim_{x \to a} \dfrac{1}{(x-a)^n} \rbr{ f(x) - \sum_{k=0}^{n} \frac{f^{(k)}(a)}{k!}(x-a)^k } = 0. 
$$
\item [\textbf{L'H\^{o}pital's Rule}.] 
If $f,g:\mathbb{F}\to \mathbb{F}$ are differentiable at $a$, $g'(a) \neq 0$, and $f(a)=f'(a)=g(a)=0,$  
then 
$$\lim_{x \to a}\frac{f(x)}{g(x)}=\frac{f'(a)}{g'(a)}.$$
\item [\textbf{Limit of Derivatives Property.}] 
If $f:\mathbb{F} \to \mathbb{F}$ is continuous at $a$, and $f'(x)$ exists for all $x$ in a punctured neighborhood of $a$, and $\lim_{x \to a} f'(x)$ exists, 
then $f$ is differentiable at $a$ and $\lim_{x \to a} f'(x) = f'(a).$ 
\end{description}

We emphasize again that these are properties an ordered field may or may not have. In the names of some of these properties we have used the word ``Theorem'' because that is what the property is in called in $\RR$. This does not mean the property is a true statement in an arbitrary ordered field. 

\begin{remark}\hspace{1pt} \vspace{-1em}
\begin{enumerate}[(1),leftmargin=2em,noitemsep,topsep=0pt]
\item Clearly, the Archimedean Property implies Countable Cofinality. But the converse is not true. For example, the ordered field of formal Laurent series is countably cofinal but not Archimedean; see, e.g., \cite{Teismann} for details. 

\item The Least Upper Bound Property, the Mean Value Theorem, and Taylor's Theorem with Lagrange Remainder are all equivalent. 
Indeed, most introductory textbooks on real analysis (see, e.g.,  \cite{Bartle}, \cite{RudinPMA}) 
prove that the Least Upper Bound Property implies the Mean Value Theorem, 
and that the Mean Value Theorem implies Taylor's Theorem with Lagrange Remainder. 
Moreover, the Mean Value theorem is simply the $n=1$ case of the Lagrange form of Taylor's Theorem. 
To see that the Mean Value Theorem implies the Least Upper Bound Property, 
suppose $A$ is a non-empty subset of $\FF$ which is bounded above but has no least upper bound.  
Take $a \in A$ and $b$ an upper bound for $A$. Let $f:\FF \to \FF$ to be the indicator function of $A \cup (-\infty,a)$. 
The set $A \cup (-\infty,a)$ and its complement are both open in $\FF$, so  
$f$ is continuous on $[a,b]$, and $f'(c) = 0$ for all $c \in (a,b)$. 
But $f(b) - f(a) = 0-1 \neq 0 = f'(c)(b-a)$.  

\item The $n=1$ case of Taylor's Theorem with Lagrange Remainder is exactly the Mean Value theorem, which is equivalent to the Least Upper Bound Property. 
On the other hand, the $n=1$ case of the Taylor's Theorem with Peano Remainder is true in every ordered field. 
(Indeed, the conclusion is essentially the definition of $f'(a)$.)  
These observations may motivate one to ask whether the full Taylor's Theorem with Peano Remainder is also true in an arbitrary ordered field. 
In fact, this was the question that motivated us to write this paper and our main theorem answers this question. 

\end{enumerate}
\end{remark}

\section{Main Result}

\begin{thm} 
In an arbitrary ordered field $\FF$, the following are equivalent. 
\begin{enumerate}[(a),topsep=0pt]
\item Least Upper Bound Property
\item L'H\^{o}pital's Rule
\item Taylor's Theorem with Peano Remainder 
\item Limit of Derivatives Property
\end{enumerate}
\end{thm}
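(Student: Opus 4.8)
The plan is to use (a) as a hub. The excerpt already records that the Least Upper Bound Property is equivalent to the Mean Value Theorem, so I would first derive (b), (c), and (d) from (a), and then prove that each of (b), (c), (d) implies (a) by contraposition, producing a counterexample in every incomplete field.

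For the forward directions I would run the familiar real-analysis proofs, which go through verbatim in an arbitrary ordered field once the Mean Value Theorem --- and hence the Cauchy Mean Value Theorem --- is in hand. From (a) one gets the Cauchy Mean Value Theorem, and L'H\^{o}pital's Rule and Taylor's Theorem with Peano Remainder follow by their standard derivations; the Limit of Derivatives Property follows by applying the Mean Value Theorem to $\frac{f(x)-f(a)}{x-a}$ and squeezing the intermediate point to $a$. I would also look for implications among (b), (c), (d) that can be proved directly in any ordered field (for example, the usual deduction of Taylor's Theorem with Peano Remainder from L'H\^{o}pital's Rule), in order to reduce the number of independent reverse implications, ideally so that only one of them requires a genuine construction.

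The reverse directions are the heart of the matter, and I would attack them by contraposition: assuming $\FF$ is incomplete, construct functions witnessing the failure of each property. The guiding observation is that each conclusion is an instance of ``passing from an estimate on a derivative to an estimate on the function.'' For Taylor's Theorem with Peano Remainder, writing $R(x)=f(x)-\sum_{k=0}^{n}\frac{f^{(k)}(a)}{k!}(x-a)^k$, one always has $R(a)=R'(a)=\cdots=R^{(n)}(a)=0$ and $R^{(n-1)}(x)=o(x-a)$ as $x\to a$, whereas the asserted conclusion is $R(x)=o((x-a)^n)$; the passage from the former to the latter is exactly what the Mean Value Theorem supplies in a complete field. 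So the task reduces to building, in an incomplete field, a function whose derivative is suitably small near $a$ but whose own size is not controlled there. I would organize this around the standard dichotomy for incompleteness, namely $\FF$ non-Archimedean versus $\FF$ Archimedean but not Cauchy complete, since each supplies a concrete defect to exploit: an infinitesimal scale and a clopen monad in the first case, and a genuine gap $r\in\RR\setminus\FF$ with $\FF$ dense in $\RR$ in the second.

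The main obstacle is that the pathology must sit at an honest point $a\in\FF$ and be detectable at \emph{every} scale shrinking to $a$, including infinitesimal scales; a lone Dedekind gap or a locally constant function will not suffice, because at an honest point such functions are locally polynomial and satisfy all three conclusions. The natural fix is to superpose features at a sequence of scales tending to $0$, which is available precisely when $\FF$ has countable cofinality and is what underlies the Deveau--Teismann theorem. The genuinely new difficulty --- and where I expect the real work to lie --- is the non-Archimedean field of \emph{uncountable} cofinality, where no sequence of scales tends to $0$. Here I would either engineer a substitute for the missing cofinal sequence, or first show that each of (b), (c), (d) forces $\FF$ to be countably cofinal and then invoke the sequential construction.
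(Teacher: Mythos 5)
Your overall architecture coincides with the paper's: (a) as hub, standard forward proofs, interderivations among (b), (c), (d) so that only one reverse implication needs a genuine construction, and that one implication proved by contraposition with an Archimedean/non-Archimedean split. Your Archimedean sketch --- a gap $c \in \RR \setminus \FF$ propagated to a sequence of scales, with a function locally constant on the resulting annuli and comparable in size to $|t|$ near $0$ --- is exactly the paper's Case 1, where $c_n = 2^{-n}c_0 \in \RR \setminus \FF$ and $f \equiv a_n$ on $I_n = \{x \in \FF : c_n < |x| < c_{n-1}\}$ defeats the Limit of Derivatives Property. But the step you explicitly leave open --- non-Archimedean $\FF$ of uncountable cofinality --- is precisely the main contribution of the paper, so the proposal has a genuine gap rather than an alternative route. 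The missing idea is the partition of $\FF \setminus \{0\}$ into \emph{Archimedean classes}: declare $p \sim q$ when $n|p| > |q|$ and $m|q| > |p|$ for some $m,n \in \NN$. Each nonzero class is open (around any of its points $p$ it contains $(p/2, 2p)$), and the classes accumulate at $0$ at every scale, infinitesimal or not, with no sequence and no cofinality hypothesis whatsoever. Fixing a representative $a_n$ in each class and setting $f(t) = a_n$ for $t \sim a_n$ and $f(0) = 0$ makes $f$ locally constant off $0$, so $f'(t) = 0$ there and $\lim_{t \to 0} f'(t) = 0$; choosing $C$ with $1 \ll C$ (possible exactly because $\FF$ is non-Archimedean) gives $1/C < |f(t)|/|t| < C$ for all $t \neq 0$, so $f$ is continuous at $0$ while its difference quotient at $0$ is bounded away from $0$, and the Limit of Derivatives Property fails.

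Two consequences of that idea are worth registering against your plan. First, it handles \emph{every} non-Archimedean field uniformly, so the relevant dichotomy is Archimedean versus non-Archimedean, not countable versus uncountable cofinality: the sequential ``superposition of scales'' is needed only in the Archimedean case, where countable cofinality is automatic. Second, your fallback --- proving that each of (b), (c), (d) forces countable cofinality and then citing Deveau--Teismann --- is unlikely to be a shortcut: the natural way to extract countable cofinality from these properties is to show they fail in all non-Archimedean fields, which is the same work the Archimedean-class construction does, and once that work is done Deveau--Teismann is no longer needed at all. A smaller divergence: the paper routes (d) out of (b) (a short L'H\^{o}pital manipulation with $F(x) = f(x) - f(a)$, $G(x) = x - a$) rather than out of (a) via the Mean Value Theorem as you propose; either way, the effect is the economy your plan aims for --- the lone construction is aimed at the weakest-looking property (d) and carries all three reverse implications through the cycle (a) $\Rightarrow$ (b) $\Rightarrow$ (d) $\Rightarrow$ (a), with (b) $\Leftrightarrow$ (c) attached.
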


\begin{proof}\hspace{1pt}
The most significant part of the proof is the implication (d) $\Rightarrow$ (a). 

(a) $\Rightarrow$ (b): A proof can be found in most real analysis textbooks (see e.g., \cite{Bartle}, \cite{RudinPMA}). 

(b) $\Rightarrow$ (c): Apply L'H\^{o}pital's Rule 
repeatedly to evaluate $\lim_{x \to a} F(x)/G(x)$,  
where $F(x) = f(x) - \sum_{k=0}^{n} \frac{f^{(k)}(a)}{k!}(x-a)^k$ and $G(x) = (x-a)^n$.

(c) $\Rightarrow$ (b): Apply Taylor's Theorem with Peano Remainder 
for $n=1$ 
to the functions $f(x)$ and $g(x)$, 
then using the results to evaluate $\lim_{x \to a} f(x)/g(x)$.

(b) $\Rightarrow$ (d): Apply L'H\^{o}pital's Rule 
%
%
to evaluate $f'(a) = \lim_{x \to a} F(x)/G(x)$, where $F(x) = f(x)-f(a)$ and $G(x)=x-a$. 

(d) $\Rightarrow$ (a): Assume $\FF$ is does not satisfy the least upper bound property.  We will show $\FF$ does not satisfy the Limit of Derivatives Property. 
The proof is split into two cases according to whether $\FF$ is Archimdean or non-Archimedean. 
The Archimdean case is inspired by Exercise 7.1.8 of \cite{Korner}. It also uses the standard fact that every Archimedean ordered field is isomorphic to a subfield of $\RR$. 
(This is proved by considering the map $\phi:\FF \to \RR$, $\phi(x) = \sup\cbr{q \in \QQ, q < x}$.) 
The non-Archmedean case uses the notion of Archimedean classes, which we learned about from \cite{Sierens}, though it seems to originate with \cite{Hahn:1907}.

\noindent Case 1: Assume $\FF$ is Archimedean. 
Then $\mathbb{F}$ is (isomorphic to) a proper subfield of $\RR$. 
Choose $c \in \RR \setminus \FF$. Using the Archimedean Property, choose $k \in \ZZ$ such that $k < c < k+1$. 
Define $c_0 = (c-k)(1 - 1/2) + 1/2$. Then $c_0 \in \RR \setminus \FF$ and $1/2 < c_0 < 1$. 
For each $n \in \ZZ$, define $c_n = 2^{-n} c_0$. 
Then $c_n \in \RR \setminus \FF$ and $2^{-(n+1)} < c_n < 2^{-n}$. 
Define $I_n = \cbr{x \in \FF : c_n < |x| < c_{n-1}} = (-c_{n-1}, -c_n) \cup (c_n, c_{n-1})$. 
Together with the set $\cbr{0}$, the sets $I_n$ form a partition of $\FF$. 
For each $n \in \ZZ_n$, fix an element $a_n \in I_n$. 
Define $f:\FF \to \FF$ by 
$$f(t):=\left\{
\begin{array}{ll}
      a_n & \text{if } t \in I_n \\
      0 & \text{if } t=0 
\end{array} 
\right.$$
Since $f$ is constant on each $I_n$, since each $I_n$ is open, and since each non-zero $t \in \FF$ belongs to some $I_n$, 
we have 
$$
f'(t) = \lim_{h \to 0} \dfrac{f(t+h)-f(t)}{h} = \lim_{h \to 0} \dfrac{0}{h} = 0 
$$
for each non-zero $t \in \FF$. 
Thus 
$$
\lim_{t \to 0} f'(t) = 0. 
$$
Moreover, for each non-zero $t \in \FF$, we have $t \in I_n$ for some $I_n$, so that  
$$
\dfrac{1}{2}|t| < \dfrac{1}{2}c_{n-1} = c_n < |f(t)| = |a_n| < c_{n-1} = 2c_n < 2|t|. 
$$
and hence 
$$
\dfrac{1}{2} < \dfrac{|f(t)|}{|t|} < 2. 
$$
It follows that 
$$
\lim_{t \to 0} f(t) = 0 = f(0)
$$ 
and 
$$
f'(0) = \lim_{t \to 0} \dfrac{f(t) - f(0)}{t - 0} = \lim_{t \to 0} \dfrac{f(t)}{t} \neq 0 = \lim_{t \to 0} f'(t). 
$$
Therefore $\FF$ does not satisfy the Limit of Derivatives Property.

\noindent Case 2: Assume $\FF$ is not Archimedean. 
For each $p,q \in \FF$, define:  
\begin{itemize}
\item $p \ll q$ if $n|p| < |q|$ for all $n \in \NN$. 
\item $p \sim q$ if there exist $m,n \in \NN$ such that $n|p| > |q|$ and $m|q| > |p|$. 
\end{itemize}
Note that $\sim$ is an equivalence relation on $\FF$. Thus its equivalence classes partition $\FF$. 
(The equivalence classes are known as the Archimedean classes of $\FF$.) 
The equivalence class of $0$ is $\cbr{0}$. 
Each non-zero equivalence class is open. Indeed, if $p$ is equivalent to a non-zero $q \in \FF$, then every element of $(p/2,2p)$ is also equivalent to $q$. 
From each non-zero equivalence class, choose a representative element. 
Let $\cbr{a_{n}}_{n \in Z}$ be the set of representatives ($Z$ is simply some index set).  
Define $f:\FF \to \FF$ by 
$$f(t):=\left\{
\begin{array}{ll}
      a_n & \text{if } t \sim a_n \\
      0 & \text{if } t \sim 0
\end{array} 
\right.$$
Since $f$ is constant on each non-zero equivalence class, since each non-zero equivalence class is open, and since each non-zero $t \in \FF$ belongs to some non-zero equivalence class, 
we have 
$$
f'(t) = \lim_{h \to 0} \dfrac{f(t+h)-f(t)}{h} = \lim_{h \to 0} \dfrac{0}{h} = 0 
$$
for each non-zero $t \in \FF$. 
Thus 
$$
\lim_{t \to 0} f'(t) = 0.
$$
Since $\FF$ is non-Archimedean, there exists $C \in \FF$ such that $0 < 1/C \ll 1 \ll C$. 
For each non-zero $t \in \FF$, we have $t \sim a_n$ for some $a_n$, so that  
$$
\dfrac{1}{C}|t| \ll |f(t)| = |a_n| \ll C|t|
$$
and hence 
$$
\dfrac{1}{C} < \dfrac{|f(t)|}{|t|} < C. 
$$
It follows that 
$$
\lim_{t \to 0} f(t) = 0 = f(0)
$$ 
and 
$$
f'(0) = \lim_{t \to 0} \dfrac{f(t) - f(0)}{t - 0} = \lim_{t \to 0} \dfrac{f(t)}{t} \neq 0 = \lim_{t \to 0} f'(t). 
$$
Therefore $\FF$ does not satisfy the Limit of Derivatives Property. 
\end{proof}

\bibliographystyle{plain}
\bibliography{Master}

\end{document}